\newtheorem{thm}{Theorem}[section]
\newtheorem{lem}[thm]{Lemma}
\newtheorem{rmk}{Remark}
\numberwithin{equation}{section}
\newcommand{\la}{\lambda}
\newcommand{\va}{\varphi}
\newcommand{\wa}{\widetilde{a}}
\newcommand{\wb}{\widetilde{b}}
\newcommand{\wc}{\widetilde{c}}
\newcommand{\R}{\mathbb{R}} 
\newcommand{\N}{\mathbb{N}} 
\newcommand{\pp}{\partial}
\title[]{%
An Inverse problem 
and 
a time-like Carleman estimate 
for 
parabolic integro-differential equations. 
}
\author{Atsushi Kawamoto}
\address{ Department of Mathematical Sciences, The University
of Tokyo, Komaba Meguro Tokyo 153-8914, Japan
}
\email{kawamo@ms.u-tokyo.ac.jp
}
\date{}
\begin{document}

\maketitle

\begin{abstract}
In this article, 
We investigate an inverse problem 
of determining the time-dependent source factor in parabolic integro-differential equations from boundary data. 
We establish the uniqueness and the conditional stability estimate of H\"older type for the inverse source problem in a cylindrical shaped domain. 
Our methodology is based on the Bukhgeim-Klibanov method by means of the Carleman estimate. 
Here we also derive the ``time-like'' Carleman estimate for parabolic integro-differential equations.  
\end{abstract}

%

%
%
%
%
\section{Introduction}


Let 
$T>0$, $n \in \N$,  $\Omega\subset \R^n$ be a bounded domain with sufficiently smooth boundary $\pp\Omega$, 
We set $Q= (0,T) \times \Omega$ and $\Sigma= (0,T) \times \pp\Omega$. 
We denote  by $\nu=\nu(x)$. the outwards unit normal vector to $\pp\Omega$ at $x\in \pp\Omega$ 

We consider 
the following parabolic integro-differential equations:
\begin{equation}
\label{eq:intpara_eq}
\pp_t u(t,x) - L u(t,x) - \int_0^t K(t,\tau)u(\tau,x)\,d\tau 
=g(t,x), \quad
(t,x)\in Q, 
\end{equation}
where $L$  is the uniformly elliptic operator:
\begin{equation*}
L
=
\sum_{i,j=1}^n \pp_i (a_{ij}(t,x)\pp_j ) 
+\sum_{j=1}^n b_j (t,x) \pp_j 
+ c(t,x),\quad (t,x)\in Q,
\end{equation*}
and $K(t,\tau)$ is the following memory kernel operator: 
\begin{align*}
&K(t,\tau)
=
\sum_{i,j=1}^n \pp_i (\wa_{ij}(t,\tau,x)\pp_j ) 
+\sum_{j=1}^n \wb_j (t,\tau,x) \pp_j 
+ \wc(t,\tau,x),
\quad
(t,\tau,x)\in (0,T)^2\times \Omega,
\end{align*}
with $a_{ij}\in C^1 (\overline{Q})$, $b_j, c \in L^\infty (Q)$,
$\wa_{ij}\in C^1 (\overline{(0,T)^2\times \Omega})$, $\wb_j, \wc \in L^\infty ((0,T)^2\times \Omega)$ 
for $i,j=1,\ldots,n$. We assume that 
$a_{ij}=a_{ji}$ 
for $i,j=1,\ldots,n$ 
and that 
there exists a constant $\mu>0$ such that 
\begin{align*}
&\frac1\mu |\xi|^2 \leq \sum_{i,j=1}^n a_{ij}(t,x) \xi_i \xi_j \leq \mu |\xi|^2,			& (t,x) \in Q, \quad \xi  \in \R^n. 
\end{align*}

In this article,  we use the parabolic integro-differential operator $P$ defined by 
\begin{equation*}
P u(t,x)
=
\pp_t u(t,x) - L u(t,x) - \int_0^t K(t,\tau)u(\tau,x) \,d\tau . 
\end{equation*}
We define the co-normal derivative for $L$ by 
$
\frac{\pp u}{\pp \nu_L} = \sum_{i,j=1}^n a_{ij} (\pp_i u)\nu_j. 
$


The parabolic integro-differential equations \eqref{eq:intpara_eq} 
arise from various studies such as 
the anomalous diffusion/transport in the heterogeneous media \cite{DT2006, FP2016}, 
the thermal conduction in materials with memory \cite{N1971, P1993} 
and so on. 
As an example, 
if we investigate the non-Fickian diffusion with memory effects, 
we may derive integro-differential equations of parabolic type \eqref{eq:intpara_eq} 
as the hybrid model for the classical and anomalous diffusion. 
(see \cite{FP2016}). 
On the well-posedness in the direct problems 
for parabolic integro-differential equations under appropriate assumptions for $L$, $K$, $g$ and initial and boundary value conditions, 
we may find a lot of previous results  
(see e.g., \cite{AT1986, AH2014, KJ2011, LP1992, Z1993} and the references therein). 


In this article, 
we firstly derive a time-like Carleman estimate for \eqref{eq:intpara_eq}. 
A Carleman estimate is a weighted $L^2$ inequality and 
a priori estimate for partial differential equations. 
A Carleman estimate has various applications such as inverse problems, unique continuations, the control theory (see e.g., \cite{H, Is1, KT, LRS}). 
Here the ``time-like'' means that the weight function of the Carleman estimate is depending on only the time variable $t$ 
and independent of the space variables $x$. 
In \cite{K1978, LRS, LP1961}, 
we may find this kind of the time-like estimates  
which are used to prove the unique continuation for partial differential equations. 
As a remark, we may refer to the weight energy method in \cite{AS, P}  
in which we may see the idea similar to time-like Carleman estimates. 
Related to the applications to inverse problems, 
Yamamoto \cite{Y} introduced the time-like Carleman estimate 
with the $x$-independent weight function $e^{-\la t}$ where $\la$ is a large parameter and 
he applied it to an inverse source problem and a backward problem for parabolic equations. 
As for the time-like Carleman estimate, we also refer to \cite{Kawa} in which   
the estimate for the degenerate parabolic equations was derived. 

In relation to the Carleman estimates for the parabolic integro-differential equations, 
we refer to  works by A. Lorenzi et al. \cite{L2012, L2013a, L2013b, LL2014,  LLY2016, LM2012}.  
In their article, they treat the case of 
that the integrands of the Volterra type integral in the equations are the first or zero-th order in space, 
that is, the case of $\wa_{ij}=0$ for $i,j=1,\ldots,n $  in  \eqref{eq:intpara_eq}. 


Our strategy to prove the time-like Carleman estimate is mainly relies on the result for the parabolic case. 
In our case, however, the difficulty arises from an integral term  
containing second order partial derivative in space. 
To estimate the integral term in \eqref{eq:intpara_eq}, 
we introduce a key integral inequality containing the weight function of the Carelman estimate 
(see e.g., Lemma 3.1 in \cite{K2}, Lemma 3.3.1 in \cite{KT}). 
Using the time-linke Carleman estimate for parabolic equations in \cite{Y} and the key integral inequality, 
we prove our Carleman estimate for \eqref{eq:intpara_eq}.


Secondly we discuss an inverse source problem for the parabolic integro-differential equations \eqref{eq:intpara_eq}. 
We assume that $\Omega$ is a cylindrical shaped domain and 
the coefficients of $L$ is independent of the one component of space variables. 
And we establish the uniqueness and the conditional stability estimate of H\"older type 
in the determination of the time dependent source factor from boundary data.


There are a lot of previous researches on inverse problems for parabolic integro-differential equations. 
In particular, a number of articles are intensively devoted to the identifications of memory kernels  (see e.g., \cite{C2009, CL1997, JW1998, JW1999, JW2000, LP1992, LS1988} and the references therein).
Related to inverse source problems, we may refer to \cite{JK2009, KJ2011, LM2010}. 
In \cite{KJ2011}, they consider the inverse problem of determining the source term from spatial data at the final time. 


However, there are not many works on the uniqueness and the stability results 
using boundary data as observations.  
Moreover, stability estimates in inverse source problems are not studied well for parabolic integro-differential equations. 


To prove the uniqueness and the stability in our inverse problem, 
we adopt the Bukhgeim-Klibanov method. 
Bukhgeim and Klibanov \cite{BK} established the global uniqueness results in inverse problems 
by means of the Carleman estimate. 
More precisely, we may refer to some monographs and surveys \cite{Is1, K1, K2, KT} and references therein. 
To apply the Bukhgeim-Klibanov method, 
our key tools are the time-like Carleman estimate and some integral inequalities. 
Most related to our result, we refer to \S9 in \cite{Y} in which the time-like Carelman estimate 
is applied to prove the uniqueness in the inverse source problem for parabolic equations in a cylindrical shaped domains. 
Here we may also refer to a related work \cite{GK2013}.



%
%

The plan of this article is as follows: 
In section 2, we state and show the time-like Carleman estimate for \eqref{eq:intpara_eq} (Theorem 2.3) 
by using the key integral inequality (Lemma2.1). 
In section 3, we state the inverse source problem 
and give our main results, that is, the uniqueness (Theorem 3.1) and the stability estimate (Theorem 3.2) in our inverse problem. 
In section 4, we prove our main theorems (Theorem 3.1 and 3.2).

%
%
\section{Carleman Estimate}
In this section, we state and show 
the time-like Carleman estimate for 
the parabolic integro-differential equation.

We consider the following weight function:
\begin{equation*}
\va (t)=e^{-\la (t-T)}, \quad t \in (0,T) . 
\end{equation*}

Firstly we prove the following key integral inequality. 
\begin{lem}
\label{lem:keyineq01}
For all $z \in L^2 (Q)$, we have  
\begin{align*}
&\int_Q \left( \int_0^t z(\tau,x) d\tau \right)^2 e^{2s\va}\,dtdx \\
&\leq 
C\int_Q \frac1{s\la\va}|z(t,x)|^2 e^{2s\va}\,dtdx
+
C\int_Q \frac1{s^2\la}|z(t,x)|^2 e^{2s\va}\,dtdx.
\end{align*}
\end{lem}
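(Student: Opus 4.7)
Since the weight depends only on $t$, the estimate is pointwise in $x\in\Omega$ before the final integration. So I would fix $x$ and set $Z(t):=\int_0^t z(\tau,x)\,d\tau$, which satisfies $Z(0)=0$ and $\partial_t Z=z(\cdot,x)$. The whole argument rests on the single identity
$\partial_t e^{2s\va(t)} = -2s\la\va(t)\,e^{2s\va(t)}$,
which comes at once from $\va(t)=e^{-\la(t-T)}$. My plan is: (i) rewrite $e^{2s\va}$ as a $t$-derivative, (ii) integrate by parts in $t$, (iii) use Young's inequality together with the lower bound $\va\ge 1$ to absorb the resulting $Z^{2}$ terms into the left-hand side.

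Writing $e^{2s\va}=-\frac{1}{2s\la\va}\partial_t e^{2s\va}$ and integrating by parts in $t$ yields
\begin{equation*}
\int_0^T Z^{2}e^{2s\va}\,dt
=-\biggl[\frac{Z^{2}e^{2s\va}}{2s\la\va}\biggr]_{0}^{T}
+\int_0^T \partial_t\!\left(\frac{Z^{2}}{2s\la\va}\right)e^{2s\va}\,dt.
\end{equation*}
The boundary contribution at $t=0$ vanishes because $Z(0)=0$, and at $t=T$ it equals $-\frac{Z(T)^{2}e^{2s}}{2s\la}\le 0$ and can be discarded. A direct computation (using $\partial_t\va=-\la\va$) gives $\partial_t(Z^{2}/(2s\la\va))=Zz/(s\la\va)+Z^{2}/(2s\va)$, so
\begin{equation*}
\int_0^T Z^{2}e^{2s\va}\,dt
\le \int_0^T \frac{Zz}{s\la\va}e^{2s\va}\,dt
+\int_0^T \frac{Z^{2}}{2s\va}e^{2s\va}\,dt.
\end{equation*}
I would then split the cross term by Young's inequality in the form $\frac{Zz}{s\la\va}\le \frac{1}{2}\frac{Z^{2}}{s\la\va}+\frac{1}{2}\frac{z^{2}}{s\la\va}$. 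Since $\va\ge 1$, the two $Z^{2}$ terms on the right are bounded by $\frac{1}{s\la}$ and $\frac{1}{2s}$ times $\int Z^{2}e^{2s\va}dt$, and so can be absorbed into the left-hand side for $s$ and $s\la$ large. Integrating over $x\in\Omega$ yields
\begin{equation*}
\int_Q Z^{2}e^{2s\va}\,dt\,dx
\le C\int_Q \frac{z^{2}}{s\la\va}e^{2s\va}\,dt\,dx,
\end{equation*}
which is even stronger than the claimed bound (the $\frac{1}{s^{2}\la}$ term on the right-hand side of the statement is slack that need not be used).

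The only delicate point is the sign of the boundary term at $t=T$: it comes with the favorable sign precisely because $\va$ is \emph{decreasing} in $t$, so that $\partial_t e^{2s\va}<0$; this is essential, since otherwise one would pick up $Z(T)^{2}e^{2s}$ on the wrong side of the inequality. Once this observation is in hand, the remainder is a routine integration-by-parts identity followed by two absorption steps, and I do not anticipate any further analytic obstacle.
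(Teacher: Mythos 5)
Your argument is correct, but it follows a genuinely different route from the paper. The paper first applies the Cauchy--Schwarz inequality to replace $\left(\int_0^t z\,d\tau\right)^2$ by $T\int_0^t|z|^2\,d\tau$, and then performs the identity $e^{2s\va}=-\tfrac{1}{2s\la\va}\pp_t(e^{2s\va})$ plus integration by parts \emph{twice} in succession (the bound $\pp_t(1/\va)=\la/\va\le\la$ and the insertion of a factor $\va\ge1$ set up the second pass); no absorption is needed, the constant depends only on $T$, and the second pass is exactly what produces the extra $\tfrac{1}{s^2\la}$ term in the statement. You instead keep $Z(t)=\int_0^t z\,d\tau$ intact, integrate by parts once, split the cross term $Zz$ by Young, and absorb the two $Z^2$ remainders into the left-hand side using $\va\ge1$; your computation $\pp_t\bigl(Z^2/(2s\la\va)\bigr)=\tfrac{Zz}{s\la\va}+\tfrac{Z^2}{2s\va}$ and the sign of the boundary term at $t=T$ (where $\va(T)=1$) are right, and the absorption is legitimate since $\int_0^T Z^2e^{2s\va}\,dt$ is finite for a.e.\ $x$. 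What each approach buys: yours yields the stronger one-term bound with only $\tfrac{1}{s\la\va}$ on the right, but the absorption factor is $1-\tfrac{1}{2s\la}-\tfrac{1}{2s}$, so your constant is uniform only when $s$ and $s\la$ are bounded away from the borderline (e.g.\ $s>1$, $\la\ge\la_0>1$, as in the paper's applications, with $C$ then depending on $\la_0$); the paper's absorption-free argument is valid for all $s,\la>0$ with $C=C(T)$, at the price of the additional $\tfrac{1}{s^2\la}$ term. Since the lemma is invoked for $\la>\la_0>1$ and $s>1$, your version suffices for everything it is used for, but you should state the largeness restriction (or the dependence of $C$ on the lower bounds for $s,\la$) explicitly, as the lemma as written carries no such hypothesis.
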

\begin{proof}
We note that 
\begin{equation*}
\va \geq 1, \quad 
\pp_t (e^{2s\va}) = -2 s\la\va e^{2s\va}, \quad
\pp_t \left(\frac{1}{\va}\right)=\frac{\la}{\va} \leq \la \quad \text{in $(0,T)$. }
\end{equation*}
By the Cauchy-Schwarz inequality and the integration by parts, we have
\begin{align*}
&\int_Q \left( \int_0^t z(\tau,x) d\tau \right)^2 e^{2s\va}\,dtdx\\
&\leq T \int_Q \left( \int_0^t |z(\tau,x)|^2 d\tau \right)  e^{2s\va}\, dtdx
\\
&=-\frac{T}{2s\la} \int_\Omega \int_{0}^T \frac1{\va} \left( \int_0^t |z(\tau,x)|^2 d\tau \right) \pp_t (e^{2s\va})\, dtdx\\
&=-\frac{T}{2s\la} \int_\Omega \frac1{\va(T)} \left( \int_0^T |z(\tau,x)|^2 d\tau \right) e^{2s\va(T)}\, dx \\
&\quad 
+\frac{T}{2s\la} \int_\Omega \int_{0}^T \pp_t \left(\frac1{\va}\right) \left( \int_0^t |z(\tau,x)|^2 d\tau \right) e^{2s\va}\, dtdx \\
&\quad 
+\frac{T}{2s\la} \int_\Omega\int_{0}^T \frac1{\va} |z(t,x)|^2 e^{2s\va}\,dtdx\\
&\leq
\frac{T}{2}
\int_Q\frac1{s\la \va} |z(t,x)|^2  e^{2s\va} \,dtdx
+\frac{T}{2s} \int_Q \left( \int_0^t |z(\tau,x)|^2 d\tau \right)\va e^{2s\va}\, dtdx \\
&=
\frac{T}{2}
\int_Q\frac1{s\la \va} |z(t,x)|^2  e^{2s\va} \,dtdx
-\frac{T}{4s^2\la} \int_\Omega \int_{0}^T\left( \int_0^t |z(\tau,x)|^2 d\tau \right) \pp_t (e^{2s\va})\, dtdx \\
&=
\frac{T}{2}
\int_Q\frac1{s\la \va} |z(t,x)|^2  e^{2s\va} \,dtdx
-\frac{T}{4s^2\la} \int_\Omega \left( \int_0^T |z(\tau,x)|^2 d\tau \right) e^{2s\va(T)}\, dtdx \\
&\quad
+\frac{T}{4s^2\la} \int_\Omega \int_{0}^T |z(t,x)|^2 e^{2s\va}\, dtdx \\
&\leq
C
\int_Q\frac1{s\la \va} |z(t,x)|^2  e^{2s\va} \,dtdx
+
C \int_Q \frac1{s^2\la} |z(t,x)|^2 e^{2s\va}\, dtdx . 
\end{align*}
Thus we conclude this lemma. 
\end{proof}

To derive our main Caleman estimate, 
we use the time-like Carleman estimate established by Yamamoto \cite{Y}. 
We may obtain the following inequality in the proof of Theorem 9.3 in \cite{Y}. 
\begin{lem}
\label{lem:tce01} 
There exists $\la_0>1$ such that for any $\la>\la_0$, 
we may choose $C=C(\la_0)>0$ satisfying the following inequality 
\begin{align}
\nonumber
&\int_Q \left[ \frac{1}{s\va} \left( |\pp_t u|^2 + \sum_{i,j=1}^n |\pp_i\pp_j u|^2 \right) + \la |\nabla u|^2 + s\la^2\va |u|^2 \right] e^{2s\va}\,dtdx\\
&\leq C 
\int_Q |(\pp_t -L)u|^2 e^{2s\va}\,dtdx 
+C\int_{ \Sigma} s\la \va \left(|\pp_t u| + |u| \right) \left|\frac{\pp u}{\pp \nu_L} \right|e^{2s\va}\,dtdS \nonumber \\
&\quad 
+C\int_Q  \la^2 |u|^2e^{2s\va}\,dtdx, 
\label{eq:tce01} 
\end{align}
for all $s>1$ and all $u \in H^1(0,T;H^2(\Omega))$ with $u(0,x)=u(T,x)=0$, $x\in \Omega$. 
\end{lem}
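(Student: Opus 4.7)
The plan is to adapt the classical conjugation-and-splitting proof of Carleman estimates, exploiting the crucial simplification that the weight $\va(t)=e^{-\la(t-T)}$ depends only on $t$. Set $w = e^{s\va}u$, so that $w(0,\cdot)=w(T,\cdot)=0$, and introduce the conjugated operator $P_\va w := e^{s\va}(\pp_t-L)(e^{-s\va}w)$. Because $\va$ is independent of $x$, spatial derivatives commute with $e^{\pm s\va}$, so one obtains the clean identity
\begin{equation*}
P_\va w = \pp_t w + s\la\va w - L w,
\end{equation*}
with no first-order $\la$-corrections of $w$ generated by the weight.

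Decompose $P_\va w = Sw + Aw + Rw$, where $Sw=-\sum_{i,j}\pp_i(a_{ij}\pp_j w)+s\la\va w$ is formally self-adjoint in $L^2(Q)$, $Aw=\pp_t w$ is formally anti-self-adjoint under the $t$-boundary conditions, and $Rw=-\sum_j b_j\pp_j w - cw$ collects the lower-order contributions. Then
\begin{equation*}
\|P_\va w - Rw\|_{L^2(Q)}^2 = \|Sw\|^2 + \|Aw\|^2 + 2(Sw, Aw)_{L^2(Q)}.
\end{equation*}
The cross term is the source of the Carleman gain: integrating by parts in $t$, $2(s\la\va w,\pp_t w) = -s\la\int_Q \va' w^2\,dt\,dx = s\la^2\int_Q \va\,w^2\,dt\,dx$ from $\va'=-\la\va$ and $w(0,\cdot)=w(T,\cdot)=0$, while $-2(\sum\pp_i(a_{ij}\pp_j w),\pp_t w)$ produces the boundary integrand over $\Sigma$ featuring $\pp u/\pp\nu_L$ and $\pp_t u$, plus an interior remainder $\int_Q \pp_t a_{ij}\pp_i w\pp_j w$ that is controlled by $\|\nabla w\|^2$.

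With a $t$-only weight there is no automatic generation of the $\la|\nabla w|^2$ term (unlike in the Imanuvilov setting with an $x$-dependent weight), so I would add a second multiplier identity by pairing $P_\va w$ with $\la w$ in $L^2(Q)$: ellipticity provides $(\la/\mu)\int_Q|\nabla w|^2$, the coupling with $s\la\va w$ gives a second copy of the quadratic term in $w$, the $\pp_t$-piece vanishes up to $t$-boundary contributions, and $(\la P_\va w, w)$ is bounded by Young's inequality against $\|P_\va w\|^2$ and a small multiple of $s\la^2\int_Q\va|w|^2$. Summing with the previous identity and taking $s,\la$ large, the lower-order remainder $\|Rw\|^2\leq C(\|\nabla w\|^2+\|w\|^2)$ is absorbed, leaving $\int_Q(\la|\nabla w|^2 + s\la^2\va|w|^2)\,dt\,dx$ dominated by $\|P_\va w\|^2$ plus the $\Sigma$-boundary integral and the unavoidable $\la^2\int_Q|w|^2\,dt\,dx$ kept on the right-hand side.

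Finally, the $(s\va)^{-1}$-weighted second-derivative and time-derivative pieces on the left are recovered directly from the equation: write $\pp_t u = Lu + (\pp_t - L)u$ to get $|\pp_t u|^2 \leq 2|Lu|^2 + 2|(\pp_t-L)u|^2$, and invoke $H^2$ elliptic regularity for $L$ to bound $\sum|\pp_i\pp_j u|^2$ by $|Lu|^2 + |\nabla u|^2 + |u|^2$; multiplying pointwise by $(s\va)^{-1}e^{2s\va}$ and integrating, the residual $|\nabla u|^2$ and $|u|^2$ contributions are absorbed by the $\la|\nabla u|^2e^{2s\va}$ and $s\la^2\va|u|^2e^{2s\va}$ terms already on the left as soon as $s\la$ is large. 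Unwinding $w = e^{s\va}u$ converts the $w$-estimates back into the stated $u$-expressions. The hardest step technically is the careful bookkeeping of the spatial boundary contributions in the cross term: one must isolate exactly the integrand $s\la\va(|\pp_t u|+|u|)|\pp u/\pp\nu_L|e^{2s\va}$ on $\Sigma$ rather than any stronger quantity, and simultaneously ensure that what survives after absorption is precisely the clean term $\la^2|u|^2 e^{2s\va}$ on the right.
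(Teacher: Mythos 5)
You should first note that the paper itself offers no proof of this lemma: it is quoted as an inequality extracted from the proof of Theorem 9.3 in Yamamoto \cite{Y}. Measured against that background, the core of your plan is the same standard energy/multiplier mechanism that such time-like estimates rest on: the conjugation $w=e^{s\va}u$ with an $x$-independent weight, the identity $P_\va w=\pp_t w+s\la\va w-Lw$, the cross term $2(s\la\va w,\pp_t w)=s\la^2\int_Q\va w^2$ coming from $\pp_t\va=-\la\va$ and $w(0,\cdot)=w(T,\cdot)=0$, and the second pairing with $\la w$ to generate $\la\|\nabla w\|^2$. That part is sound: it produces the terms $\la|\nabla u|^2$ and $s\la^2\va|u|^2$ on the left, the boundary integrand $s\la\va(|\pp_t u|+|u|)\left|\frac{\pp u}{\pp\nu_L}\right|e^{2s\va}$ (since $\pp_t w=e^{s\va}(\pp_t u-s\la\va u)$ and $\frac{\pp w}{\pp\nu_L}=e^{s\va}\frac{\pp u}{\pp\nu_L}$), and the residual $\la^2|u|^2$ term, after absorbing $\|Rw\|^2$ and the $\pp_t a_{ij}$ remainder for $\la$ large.

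The genuine gap is your last step, recovering $\frac1{s\va}\bigl(|\pp_t u|^2+\sum_{i,j}|\pp_i\pp_j u|^2\bigr)$ by ``$H^2$ elliptic regularity''. First, elliptic regularity is not a pointwise inequality, so ``multiplying pointwise by $(s\va)^{-1}e^{2s\va}$'' is not meaningful; this is repairable by arguing on each time slice, because the weight depends only on $t$. The real obstruction is that the lemma imposes no boundary condition on $\pp\Omega$: $u$ is an arbitrary element of $H^1(0,T;H^2(\Omega))$ vanishing only at $t=0,T$. The up-to-the-boundary a priori estimate $\|D^2u(t,\cdot)\|_{L^2(\Omega)}\le C\bigl(\|Lu(t,\cdot)\|_{L^2(\Omega)}+\|u(t,\cdot)\|_{H^1(\Omega)}\bigr)$ is false without a boundary condition (take $L=\Delta$ on a disk and the harmonic functions $r^k\cos k\theta$: the left-hand side grows like $k^{3/2}$ while the right-hand side grows like $k^{1/2}$), and interior regularity only controls the Hessian on compact subsets, not on all of $Q$ as \eqref{eq:tce01} requires. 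Nor does the first-order boundary integral you are allowed to keep on the right compensate for this in your scheme: nothing in your integrations by parts controls second-order traces on $\Sigma$. Since your bound for $\frac1{s\va}|\pp_t u|^2$ goes through $|Lu|^2$, it hinges on the same missing Hessian control. So, as written, the argument does not yield the second-derivative (and hence time-derivative) part of the left-hand side; to close it you must either impose and use a spatial boundary condition, track additional boundary terms of second order, or reproduce the specific derivation in \cite{Y} --- this is precisely the delicate point that the paper's citation is carrying.
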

%
%

Using Lemma \ref{lem:keyineq01} and Lemma \ref{lem:tce01}, 
we may prove the following Carleman estimate for the parabolic integro-differential equation \eqref{eq:intpara_eq}. 

\begin{thm}
There exists $\la_0>1$ such that for any $\la>\la_0$, 
we may choose $s_0(\la)>1$ satisfying:
there exists $C^\prime=C^\prime(s_0,\la_0)>0$ such that 
\begin{align}
\nonumber
&\int_Q \left[ \frac{1}{s\va} \left( |\pp_t u|^2 + \sum_{i,j=1}^n |\pp_i\pp_j u|^2 \right) + \la |\nabla u|^2 + s\la^2\va |u|^2 \right] e^{2s\va}\,dtdx\\
&\leq C^\prime 
\int_Q |Pu|^2 e^{2s\va}\,dtdx 
+C^\prime\int_{\Sigma} s\la \va \left(|\pp_t u| + |u| \right) \left|\frac{\pp u}{\pp \nu_L} \right|e^{2s\va}\,dtdS,  
\label{eq:tce_intpara} 
\end{align}
for all $s>s_0$ and all $u \in H^1(0,T;H^2(\Omega))$ with $u(0,x)=u(T,x)=0$, $x\in \Omega$. 
\end{thm}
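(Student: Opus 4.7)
The plan is to apply Yamamoto's parabolic Carleman estimate (Lemma~\ref{lem:tce01}) to $u$, treating the Volterra integral as an additional source term, and then to absorb that new contribution into the left-hand side of \eqref{eq:tce01} by means of Lemma~\ref{lem:keyineq01}, once $\la$ and $s$ are chosen sufficiently large.

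First, I rewrite $(\pp_t - L)u = Pu + \int_0^t K(t,\tau)u(\tau,x)\,d\tau$, so that
\[
|(\pp_t - L)u|^2 \leq 2|Pu|^2 + 2\left|\int_0^t K(t,\tau)u(\tau,x)\,d\tau\right|^2,
\]
and substitute into \eqref{eq:tce01}. The regularity hypotheses on $\wa_{ij},\wb_j,\wc$ provide a constant $M>0$ independent of $(t,\tau,x)$ such that $|K(t,\tau)u(\tau,x)|^2 \leq M\bigl(\sum_{i,j=1}^n |\pp_i\pp_j u(\tau,x)|^2 + |\nabla u(\tau,x)|^2 + |u(\tau,x)|^2\bigr)$, so by the Cauchy--Schwarz step used at the start of the proof of Lemma~\ref{lem:keyineq01},
\[
\left|\int_0^t Ku\,d\tau\right|^2 \leq T\int_0^t |Ku(\tau,x)|^2\,d\tau \leq MT\int_0^t \Bigl(\sum_{i,j=1}^n|\pp_i\pp_j u|^2 + |\nabla u|^2 + |u|^2\Bigr)(\tau,x)\,d\tau.
\]

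Next, I invoke the intermediate inequality established inside the proof of Lemma~\ref{lem:keyineq01}, namely that for every $w\in L^2(Q)$,
\[
\int_Q \int_0^t |w(\tau,x)|^2\,d\tau\, e^{2s\va}\,dtdx \leq C\int_Q \Bigl(\frac{1}{s\la\va}+\frac{1}{s^2\la}\Bigr)|w(t,x)|^2\, e^{2s\va}\,dtdx,
\]
applied in turn to $w=\pp_i\pp_j u$, $w=\pp_j u$ and $w=u$. This yields
\[
\int_Q \left|\int_0^t Ku\,d\tau\right|^2 e^{2s\va}\, dtdx \leq C\int_Q \Bigl(\tfrac{1}{s\la\va}+\tfrac{1}{s^2\la}\Bigr)\Bigl(\sum_{i,j=1}^n|\pp_i\pp_j u|^2 + |\nabla u|^2 + |u|^2\Bigr)e^{2s\va}\, dtdx.
\]
Each term on the right is now absorbed into the left-hand side of \eqref{eq:tce01}: $\frac{1}{s\la\va}|\pp_i\pp_j u|^2$ into $\frac{1}{s\va}|\pp_i\pp_j u|^2$ when $\la$ is large; $\frac{1}{s\la\va}|\nabla u|^2$ into $\la|\nabla u|^2$ when $s\la^2\va$ is large; $\frac{1}{s\la\va}|u|^2$ into $s\la^2\va|u|^2$ when $s^2\la^3\va^2$ is large; the $\frac{1}{s^2\la}$ contributions are handled identically. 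Finally, the residual $C\int_Q \la^2|u|^2 e^{2s\va}\,dtdx$ already present on the right-hand side of \eqref{eq:tce01} is absorbed by $s\la^2\va|u|^2$ thanks to $\va\geq 1$ and $s$ large. Choosing $\la_0$ sufficiently large and then $s_0=s_0(\la)$ sufficiently large delivers \eqref{eq:tce_intpara}.

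The main obstacle, as emphasised in the introduction, is precisely that $K$ carries second-order spatial derivatives of $u$, which prevents one from directly quoting earlier parabolic integro-differential Carleman estimates. Lemma~\ref{lem:keyineq01} resolves this by producing the small prefactors $\frac{1}{s\la\va}$ and $\frac{1}{s^2\la}$; the one-power-of-$\la$ gain relative to the $\frac{1}{s\va}$ weight that multiplies $|\pp_i\pp_j u|^2$ on the left of Yamamoto's estimate is exactly what makes the absorption feasible.
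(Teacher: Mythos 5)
Your proposal is correct and follows essentially the same route as the paper: decompose $(\pp_t-L)u=Pu+\int_0^t K(t,\tau)u\,d\tau$, apply Lemma~\ref{lem:tce01}, control the memory term through Lemma~\ref{lem:keyineq01} (the paper applies the small prefactors $\frac{1}{s\la\va}$, $\frac{1}{s^2\la}$ only to the second-order derivatives and treats $|\nabla u|^2+|u|^2$ crudely, but this makes no difference), and absorb everything, including the residual $\la^2|u|^2$ term, by taking $\la$ large and then $s$ large.
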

\begin{proof}
Note that 
\begin{equation*}
\pp_t u(t,x)-Lu(t,x)
=Pu(t,x) +\int_0^t K u(t,\tau)u(\tau,x)\,d\tau ,\quad (t,x)\in Q.
\end{equation*}
By Lemma \ref{lem:tce01}, we have 
\begin{align}
\nonumber
&\int_Q \left[ \frac{1}{s\va} \left( |\pp_t u|^2 + \sum_{i,j=1}^n |\pp_i\pp_j u|^2 \right) + \la |\nabla u|^2 + s\la^2\va |u|^2 \right] e^{2s\va}\,dtdx\\
&\leq C 
\int_Q |Pu|^2 e^{2s\va}\,dtdx 
+C\int_{\Sigma} s\la\va \left(|\pp_t u| + |u| \right) \left|\frac{\pp u}{\pp \nu_L} \right|e^{2s\va}\,dtdS \nonumber \\
&\quad 
+C\int_Q \left| \int_0^t  K(t,\tau)u(\tau,x)\,d\tau\right|^2 e^{2s\va}\,dtdx
+C\int_Q \la^2 |u|^2 e^{2s\va}\,dtdx. 
\label{eq:tce101}
\end{align}
We estimate the third term on the right-hand side of \eqref{eq:tce101} by Lemma \ref{lem:keyineq01}. 
\begin{align*}
&\int_Q \left|\int_0^t  K(t,\tau)u(\tau,x)\,d\tau\right|^2 e^{2s\va}\,dtdx \\
&\leq C \int_Q \left| \int_0^t  \sum_{i,j=1}^n |\pp_i\pp_j u(\tau,x)|\,d\tau\right|^2 e^{2s\va}\,dtdx \\
&\quad
	+ C \int_Q \left| \int_0^t  \left( |\nabla u(\tau,x)|+|u(\tau,x)|\right)\,d\tau\right|^2 e^{2s\va}\,dtdx\\
&\leq C\int_Q \frac{1}{s\la\va} \sum_{i,j=1}^n |\pp_i\pp_ju|^2 e^{2s\va}\,dtdx 
+ C\int_Q \frac{1}{s^2\la} \sum_{i,j=1}^n |\pp_i\pp_j u|^2 e^{2s\va}\,dtdx \\
&\quad 
+ C\int_Q \left( |\nabla u|^2 + |u|^2 \right)e^{2s\va}\,dtdx , 
\end{align*}
for $\la>\la_0$ and $s>1$. 
Together this with \eqref{eq:tce101}, we may obtain
\begin{align}
\nonumber
&\int_Q \left[ \frac{1}{s\va} \left( |\pp_t u|^2 + \sum_{i,j=1}^n |\pp_i\pp_j u|^2 \right) + \la |\nabla u|^2 + s\la^2\va |u|^2 \right] e^{2s\va}\,dtdx\\
&\leq C 
\int_Q |Pu|^2 e^{2s\va}\,dtdx 
+\int_{ \Sigma} s\la\va \left(|\pp_t u| + |u| \right) \left|\frac{\pp u}{\pp \nu_L} \right|e^{2s\va}\,dtdS\nonumber \\
&\quad 
+ C\int_Q \left( \frac{1}{s\la\va} \sum_{i,j=1}^n |\pp_i\pp_ju|^2 + |\nabla u|^2\right) e^{2s\va}\,dtdx \nonumber \\
&\quad
+ C\int_Q \left( \frac{1}{s^2\la} \sum_{i,j=1}^n |\pp_i\pp_j u|^2 + \la^2 |u|^2 \right) e^{2s\va}\,dtdx ,  
\label{eq:tce02}  
\end{align}
for $\la>\la_0$ and $s>1$. 
Taking sufficiently large $\la>\la_0$, and then, choosing 
sufficiently large $s>1$, we may absorb the third and forth terms on the right-hand side of \eqref{eq:tce02} 
into the left-hand side. 
Thus, we conclude the \eqref{eq:tce_intpara}. 
\end{proof}

%
%
\section{Inverse Source Problem}
Let $\ell>0$, $n \in \N$, $n\geq 2$, $D\subset \R^{n-1}$ be a bounded domain 
with sufficiently smooth boundary $\pp D$. 
We set a cylindrical shaped domain and its boundaries: 
$\Omega=(0,\ell)\times D$,  $Q=(0,T)\times \Omega$, 
$\Sigma_0=(0,T)\times \{0,\ell\}\times \overline{D}$, 
$\Sigma_1=(0,T)\times (0,\ell)\times \pp D$, $\Sigma =(0,T)\times \pp\Omega=\Sigma_0\cup \Sigma_1$.  
We use the notation $x=(x_1,x^\prime)\in \Omega$ where $x_1 \in (0,\ell)$ and $x^\prime=(x_2,\ldots, x_n) \in D$. 

Let $R(t,x)$, $(t,x)\in Q$ be a given function. 
We consider 
\begin{equation}
\label{eq:ibvp}
\left\{
\begin{aligned}
&P^\prime u (t,x)=f(t,x^\prime) R(t,x),	&(t,x)\in Q,			\\
&u(0,x)=0,								&x\in \Omega,		\\
&u(t,x)=\pp_1 u(t,x)=0,					&(t,x)\in \Sigma_0,
\end{aligned}
\right.
\end{equation}
where $f\in L^2((0,T)\times D)$, 
%
%
\begin{equation*}
P^\prime u(t,x)
=
\pp_t u(t,x) - L^\prime u(t,x) - \int_0^t K(t,\tau)u(\tau,x)\,d\tau , 
\quad (t,x)\in Q,
\end{equation*} 
and $L^\prime$ and $K(t,\tau)$ are the following operators:
\begin{align*}
L^\prime 
&=
\sum_{i,j=1}^n \pp_i (a^\prime_{ij}(t,x^\prime )\pp_j ) 
+\sum_{j=1}^n b_j (t,x) \pp_j 
+ c(t,x) ,
\quad (t,x)\in Q,\\
K (t,\tau)
&=
\sum_{i,j=1}^n \pp_i (\wa_{ij}(t,\tau,x)\pp_j ) 
+\sum_{j=1}^n \wb_j (t,\tau,x) \pp_j 
+ \wc(t,\tau,x), 
\quad (t,\tau,x)\in (0,T)^2\times \Omega,
\end{align*}
with
$a_{ij}\in C^1 ([0,T]\times \overline{D})$, 
$\wa_{ij}\in C^1 ([0,T]^2; C^2 ([0,\ell]; C^1(\overline{D})))$, 
$b_j, c\in L^\infty (0,T;C^1([0,\ell];L^\infty (D)))$, 
$\wb_j, \wc \in L^\infty ((0,T)^2;C^1([0,\ell];L^\infty (D)))$ 
for $i,j=1,\ldots,n$. 
We assume that 
$a^\prime _{ij}=a^\prime _{ji}$ 
for $i,j=1,\ldots,n$ 
and that 
there exists a constant $\mu>0$ such that 
\begin{align*}
&\frac1\mu |\xi|^2 \leq \sum_{i,j=1}^n a^\prime_{ij}(t,x^\prime) \xi_i \xi_j\leq \mu |\xi|^2,		& (t,x^\prime) \in (0,T)\times D,\quad \xi  \in \R^n. 
\end{align*}
Here we note that 
$f$ and 
$a^\prime_{ij}$, $i,j=1,\ldots,n$ are 
independent of $x_1$. 

We assume that 
%
%
\begin{equation}
\label{eq:Rcondi}
R\in C^1 ([0,T];C^3([0,\ell];C^2(\overline{D}))),\quad |R(t,x)|\neq 0,\quad (t,x)\in \overline{Q}. 
\end{equation}

Let $t_0 \in (0,T)$.  We investigate the following inverse problem. 

\noindent
\textbf{Inverse Source Problem}: 
Can we determine the time dependent source factor $f(t,x^\prime)$, $(t,x^\prime)\in (0,t_0)\times D$ 
from boundary data on $\Sigma_1$?

\begin{thm}[Uniqueness]
\label{thm:uni}
Let $R$ satisfy \eqref{eq:Rcondi}. 
We assume that $u, \pp_1 u \in H^1(0,T;H^2 (\Omega))$ and $u$ satisfies \eqref{eq:ibvp}. 
Moreover we suppose that there exists a constant $M>1$ such that  
$\| u\|_{L^2(Q)}+\|\pp_1 u\|_{L^2 (Q)} \leq M$ and $\|f\|_{L^2((0,T)\times D)}\leq M$.  
If $u(t,x)=0$, $(t,x)\in \Sigma_1$, then we have $f(t,x^\prime)=0$, $(t,x^\prime)\in (0,t_0)\times D$. 
\end{thm}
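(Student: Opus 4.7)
The plan is to apply the Bukhgeim-Klibanov method driven by the time-like Carleman estimate \eqref{eq:tce_intpara}. The essential geometric feature of the cylindrical setup is that both $f$ and the leading coefficients $a^\prime_{ij}$ are independent of $x_1$, so $\pp_1$ commutes with the principal part of $L^\prime$. I would first set $v := \pp_1 u$. On $\Sigma_0$ we have $v = \pp_1 u = 0$ by \eqref{eq:ibvp}; on $\Sigma_1$ the operator $\pp_1$ is tangential, so the hypothesis $u = 0$ on $\Sigma_1$ immediately yields $v = 0$ on $\Sigma_1$. Hence $v$ vanishes on all of $\Sigma$ and at $t = 0$. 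Also $u|_\Sigma = 0$ forces $\pp_t u|_\Sigma = 0$, so the boundary integrands in \eqref{eq:tce_intpara} vanish for both $u$ and $v$. Differentiating \eqref{eq:ibvp} in $x_1$ yields
\[
P^\prime v = f\,\pp_1 R + [\pp_1, L^\prime]u + \int_0^t [\pp_1, K(t,\tau)]u(\tau,\cdot)\,d\tau,
\]
in which the principal part of $[\pp_1, L^\prime]$ cancels while $[\pp_1, K]u$ carries second-order spatial derivatives of $u$ only under the time integral.

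Since the Carleman estimate requires $u(T,\cdot) = 0$, I would fix $t_1 \in (t_0, T)$ and a cutoff $\chi \in C^\infty([0,T])$ with $\chi \equiv 1$ on $[0, t_0]$ and $\chi \equiv 0$ on $[t_1, T]$, then apply \eqref{eq:tce_intpara} to $\widetilde u := \chi u$ and $\widetilde v := \chi v$. A direct computation gives
\[
P^\prime \widetilde u = \chi f R + \chi^\prime u + \int_0^t K(t,\tau)\bigl(\chi(t) - \chi(\tau)\bigr) u(\tau,\cdot)\,d\tau,
\]
and analogously for $\widetilde v$ with leading source $\chi f\,\pp_1 R$. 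The Volterra commutators $\int_0^t [\pp_1, K]u\,d\tau$ and $\int_0^t K(\chi(t)-\chi(\tau))u\,d\tau$ both contain second-order spatial derivatives of $u$ under the time integral; Lemma~\ref{lem:keyineq01} is the key tool that converts such quantities into weighted $L^2$-norms of $\pp_i\pp_j u$ with the small prefactors $1/(s\la\va)$ and $1/(s^2\la)$, both of which absorb into $\int_Q \frac{1}{s\va}|\pp_i\pp_j u|^2 e^{2s\va}$ on the Carleman left-hand side after $\la$ is taken large and then $s$.

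The essential observation that closes the argument is that on $[0, t_0]$ the cutoff acts trivially ($\chi \equiv 1$, $\chi^\prime \equiv 0$, and $\chi(t) - \chi(\tau) = 0$ for all $t, \tau \in [0, t_0]$), so the identity $P^\prime \widetilde u = fR$ holds pointwise on $(0, t_0) \times \Omega$. Combining $|R| \geq \delta > 0$ with the two Carleman estimates for $\widetilde u$ and $\widetilde v$, absorbing all derivative-of-$u$ terms via Lemma~\ref{lem:keyineq01}, and using the a priori bound $\|u\|_{L^2(Q)} + \|\pp_1 u\|_{L^2(Q)} \leq M$ to control the cutoff contributions $\int |\chi^\prime|^2(|u|^2+|v|^2)e^{2s\va}$ (which are supported in $[t_0, t_1]$ where $\va(t) \leq \va(t_0)$), one arrives at a weighted inequality of the form
\[
e^{2s\va(t_0)}\int_0^{t_0}\!\!\int_D |f|^2\,dx^\prime dt \leq C\,M^2\,e^{2s\va(t_1)} + (\text{lower-order in } s),
\]
and the strict exponential gap $\va(t_1) < \va(t_0)$ forces $\int_0^{t_0}\!\int_D |f|^2 = 0$ upon sending $s \to \infty$.

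The main technical obstacle is the delicate interplay in the absorption step: \eqref{eq:tce_intpara} controls $\pp_t u$ and $\pp_i\pp_j u$ only through the weak prefactor $1/(s\va)$, so an incautious use of $fR = P^\prime u$ to bound $|f|^2$ by $|\pp_t u|^2 + |\pp_i\pp_j u|^2$ would generate compensating $s$-factors that defeat absorption. The resolution is to exploit that the cutoff memory commutator vanishes on $[0, t_0]$ so the relation $\chi fR = P^\prime \widetilde u$ is exact there, and to feed every Volterra integral (from both the original memory kernel and the cutoff commutator) through Lemma~\ref{lem:keyineq01}, whose prefactors $1/(s\la\va)$ and $1/(s^2\la)$ are exactly small enough to let the absorption succeed while leaving only cutoff garbage that is exponentially smaller than the leading weight.
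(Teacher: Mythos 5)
There is a genuine gap, and it is the central one: you never eliminate the unknown source before applying the Carleman estimate. Because $f$ depends only on $(t,x^\prime)$, differentiating \eqref{eq:ibvp} in $x_1$ gives $\pp_1(fR)=f\,\pp_1 R$, so your $v=\pp_1 u$ (and likewise $\widetilde u=\chi u$, $\widetilde v=\chi v$) still satisfies an equation whose right-hand side contains the unknown $f$. Consequently the Carleman inequalities you invoke carry the terms $\int_Q|\chi fR|^2e^{2s\va}$ and $\int_Q|\chi f\,\pp_1R|^2e^{2s\va}$ on their right-hand sides, and these cannot be removed: they are not absorbable into the left-hand side (the left-hand side contains no $f$), and on $(0,t_0)$ they already dominate the quantity $s^{-1}e^{2s\delta_0}\|f\|^2_{L^2((0,t_0)\times D)}$ that the equation $P^\prime u=fR$ lets you extract from the Carleman left-hand side (the weak prefactor $1/(s\va)$ costs exactly the factor $s$ you cannot recover). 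Using the a priori bound $\|f\|\leq M$ instead only gives $M^2e^{2s\va(0)}=M^2e^{2se^{\la T}}$, which is exponentially larger than the target scale $e^{2s\va(t_1)}$. So the key inequality you assert, $e^{2s\va(t_0)}\|f\|^2\lesssim M^2e^{2s\va(t_1)}+\dots$, does not follow from your scheme, and no choice of cutoff or use of Lemma~\ref{lem:keyineq01} repairs this, because the failure is not in the Volterra terms but in the persistence of $f$ in the equations to which the Carleman estimate is applied.

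The missing idea (and what the paper does) is the Bukhgeim--Klibanov reduction: first divide by $R$, setting $v=u/R$, so the source in \eqref{eq:ibvp_v} is exactly $f(t,x^\prime)$, and only then differentiate in $x_1$ and set $w=\pp_1 v$ (equivalently, work with $\pp_1u-\tfrac{\pp_1R}{R}u=R\,\pp_1(u/R)$). Since $f$ and $a^\prime_{ij}$ are $x_1$-independent, the equation for $w$ contains no $f$ at all; the Carleman estimate applied to $y=\chi w$ then yields a bound by data only ($CM^2e^{2s\delta_1}$ from $\pp_t\chi$, plus boundary terms on $\Sigma_1$ whose product structure vanishes when $u|_{\Sigma_1}=0$, since $\pp_t$ and $\pp_1$ are tangential there). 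The lower-order terms in $v$ are recovered from $w$ through the $x_1$-integral representations \eqref{eq:rep_v1}--\eqref{eq:rep_v3} (using $v(t,0,x^\prime)=0$), the Volterra terms are handled as you describe via Lemma~\ref{lem:keyineq01}, and finally $f$ is read off from the $v$-equation on $(0,t_0)$ keeping the $1/s$ factor, so that the weak prefactor is harmless against the exponential gap $\delta_0>\delta_1$; letting $s\to\infty$ gives $f=0$ on $(0,t_0)\times D$. Your treatment of the cutoff, the boundary terms, and the memory integrals is in the right spirit, but without the division by $R$ prior to the $x_1$-differentiation the argument cannot close.
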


\begin{thm}[Stability estimate]
\label{thm:stab}
Under the assumptions of Theorem \ref{thm:uni},  
there exist constants $\kappa \in (0,1)$ and $C>0$ such that 
\begin{equation}
\label{eq:stab}
\| f\|_{L^2((0,t_0)\times D)} 
\leq C  B^\kappa, 
\end{equation}
where
\begin{align*}
B&=
\| \pp_t \pp_1 u \|_{L^2(\Sigma_1)}
+
\| \pp_t  u \|_{L^2(\Sigma_1)}
+
\| \pp_1  u \|_{L^2(\Sigma_1)}
+
\|  u \|_{L^2(\Sigma_1)}\\
&\quad
+
\left\| \frac{\pp (\pp_1u) }{\pp \nu_{L^\prime}}  \right\|_{L^2(\Sigma_1)}
+
\left\| \frac{\pp u }{\pp \nu_{L^\prime}}  \right\|_{L^2(\Sigma_1)} . 
\end{align*}
\end{thm}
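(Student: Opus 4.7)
My plan is to implement the Bukhgeim--Klibanov method using the time-like Carleman estimate of Theorem~2.3, a cutoff in time, and a differentiation in the $x_1$-direction (along which both $f$ and the leading coefficients $a'_{ij}$ are invariant by assumption). The H\"older exponent $\kappa\in(0,1)$ will arise by balancing, in the Carleman parameter $s$, a two-term estimate of the form $\|f\|^2\le C(e^{-\alpha s}M^2+e^{\beta s}B^2)$ with some positive $\alpha,\beta$.

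Fix $t_0<t_\ast<t_1<T$ and a cutoff $\chi\in C^\infty([0,T])$ with $\chi\equiv 1$ on $[0,t_\ast]$, $\chi\equiv 0$ on $[t_1,T]$ and $0\le\chi\le 1$; set $\wy:=\chi u$ and $\wv:=\chi\ppp_1 u$. Both vanish at $t=0$ (initial condition) and at $t=T$ (cutoff), so both are admissible in Theorem~2.3. Because $\ppp_1 a'_{ij}=0$, the commutator $[\ppp_1,L']$ is first-order in $u$, and a direct computation gives
\begin{align*}
P'\wy &= \chi fR+\chi'u-\int_0^t K(t,\tau)(\chi(t)-\chi(\tau))u(\tau,x)\,d\tau,\\
P'\wv &= \chi f\,\ppp_1 R+\chi G(u)+\chi'\ppp_1 u-\int_0^t\bigl[K(t,\tau)(\chi(t)-\chi(\tau))\ppp_1 u(\tau,x)+\chi(t)(\ppp_1 K)(t,\tau)u(\tau,x)\bigr]\,d\tau,
\end{align*}
where $G(u)$ is a first-order operator in $u$ coming from $[\ppp_1,L']$. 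The assumptions $u=\ppp_1 u=0$ on $\Sigma_0$ force $\wy,\wv,\ppp_t\wy,\ppp_t\wv$ to vanish there, so the Carleman surface integral in Theorem~2.3 reduces to $\Sigma_1$ and, once the $s\la\va e^{2s\va}$ factor has been estimated, is dominated by $CB^2 e^{2s\va(0)}$ (with all six boundary norms in $B$ entering).

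Next, apply Theorem~2.3 to $\wy$ and to $\wv$ and sum. The Volterra integrals in $P'\wy$ and $P'\wv$ are handled by Lemma~2.1, which converts them to $(s\la\va)^{-1}$- and $(s^2\la)^{-1}$-weighted $L^2$-norms of up to second-order derivatives of $u$ and $\ppp_1 u$; these are absorbed into the Carleman left-hand side for $s,\la$ large. The local first-order term $\chi G(u)$ is absorbed into $\la|\nabla\wy|^2+s\la^2\va|\wy|^2$ for $\la$ large. The cutoff errors $\chi'u,\chi'\ppp_1 u$ are supported on $[t_\ast,t_1]\subset\{\va\le\va(t_\ast)\}$, so by the a priori bound $\|u\|+\|\ppp_1 u\|\le M$ they contribute at most $CM^2 e^{2s\va(t_\ast)}$. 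It remains to transfer the source term $\int_Q\chi^2 f^2(R^2+(\ppp_1 R)^2)e^{2s\va}$ from the right to the left of the summed inequality. Using the identity $\chi fR=P'\wy-\chi' u-(\text{integral})$ together with $|R|\ge c_0>0$, one writes $|\chi f|^2\le C(|P'\wy|^2+(\chi')^2u^2+|\text{error}|^2)$ and then re-expresses $P'\wy$ through the underlying PDE $P'u=\ppp_t u-L'u-\int_0^t Ku\,d\tau$, bounding each of $\chi\ppp_t u,\chi L'u,\chi\int_0^t Ku$ weighted by $e^{2s\va}$ against the Carleman left-hand side (exploiting its factors $(s\va)^{-1},\la,s\la^2\va$) combined with Lemma~2.1. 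Together with the lower bound $\int_Q\chi^2 f^2 e^{2s\va}\ge\ell\, e^{2s\va(t_0)}\|f\|_{L^2((0,t_0)\times D)}^2$ (from $\ppp_1 f=0$ and $\va\ge\va(t_0)$ on $[0,t_0]$) this yields
\[
\ell\,\|f\|^2_{L^2((0,t_0)\times D)}\,e^{2s\va(t_0)}\;\le\;CM^2 e^{2s\va(t_\ast)}+CB^2 e^{2s\va(0)}
\]
for all $s\ge s_0(\la)$ with $\la$ sufficiently large.

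Finally, dividing by $\ell\, e^{2s\va(t_0)}$ and using $\va(t_\ast)<\va(t_0)<\va(0)$ produces
\[
\|f\|^2_{L^2((0,t_0)\times D)}\;\le\;CM^2 e^{-2s(\va(t_0)-\va(t_\ast))}+CB^2 e^{2s(\va(0)-\va(t_0))},
\]
with both exponents strictly positive. Since $M$ is an a priori constant, one may assume $B$ small and then optimise $s$ to balance the two right-hand-side terms; this gives the H\"older bound $\|f\|^2\le CM^{2(1-\kappa)}B^{2\kappa}$ with $\kappa=(\va(t_0)-\va(t_\ast))/(\va(0)-\va(t_\ast))\in(0,1)$, which, absorbing $M$ into the constant, is (\ref{eq:stab}). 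The hardest part of the argument is the absorption step in the third paragraph: showing that $\int_Q\chi^2 f^2 e^{2s\va}$ on the right-hand side of the summed Carleman inequality is genuinely dominated by the left-hand side (not merely by itself plus small terms), while using Lemma~2.1 to absorb the second-order-in-$u$ contributions buried in the Volterra kernel $K$ applied to $\ppp_1 u$, and while keeping the cutoff error confined to $\{\va\le\va(t_\ast)\}$ rather than $\{\va\le\va(t_0)\}$, because the strict gap $\va(t_\ast)<\va(t_0)$ is what supplies the positive exponent $\alpha$ driving the H\"older interpolation.
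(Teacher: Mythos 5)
Your overall architecture (time cutoff, time-like Carleman estimate, two-term balance in $s$ giving a H\"older exponent) matches the paper, but there is a genuine gap at exactly the step you flag as the hardest: in your scheme the unknown source is never eliminated, and it cannot be absorbed. Because you differentiate $u$ itself rather than $u/R$, the equation for $\chi\partial_1 u$ still carries the source term $\chi f\,\partial_1 R$ (and the equation for $\chi u$ carries $\chi f R$), so the right-hand side of your summed Carleman inequality contains $\int_Q \chi^2 f^2\bigl(R^2+(\partial_1 R)^2\bigr)e^{2s\varphi}\,dtdx$. The Carleman left-hand side contains no $f$, and every upper bound you propose for $\int_Q\chi^2 f^2 e^{2s\varphi}$ (via $fR=P'u$ and the weighted norms of $\chi\partial_t u$, $\chi\partial_i\partial_j u$, \ldots) passes through that same left-hand side, which is in turn dominated by the right-hand side containing the very source term you want to absorb, with a constant that does not become small as $s,\lambda\to\infty$; on the contrary, the left-hand side carries weights $1/(s\varphi)$ on precisely the derivatives you need, so the chain produces an inequality of the shape $X\le Cs\varphi(0)\,(X+\cdots)$ and nothing can be absorbed. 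The paper avoids this by first setting $v=u/R$ (using $|R|\neq 0$), so that the source of the $v$-equation is $f(t,x')$ alone, and only then differentiating in $x_1$: since $\partial_1 f=0$ and $\partial_1 a'_{ij}=0$, the equation for $w=\partial_1 v$ contains no $f$ at all. The Carleman estimate is applied to $y=\chi w$, whose right-hand side consists only of cutoff and boundary errors plus coupling terms in $v$, which are recovered from $w$ through $v(t,x)=\int_0^{x_1}w(t,\zeta,x')\,d\zeta$ (using $v=0$ at $x_1=0$); finally $f$ is read off from the $v$-equation on $(0,t_0)\times\Omega$, where $\chi\equiv1$ and $\varphi\ge\delta_0$, and bounded by the Carleman left-hand side for $y$. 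Without this elimination step your third paragraph cannot be carried out.

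A secondary problem: your cutoff error is not only $\chi'u$ and $\chi'\partial_1 u$. The commutator of the cutoff with the Volterra term, $\int_0^t K(t,\tau)\bigl(\chi(t)-\chi(\tau)\bigr)u(\tau,x)\,d\tau$, involves second spatial derivatives of $u$ over all of $(0,t)$, and these are not controlled by the a priori bound $\|u\|_{L^2(Q)}+\|\partial_1 u\|_{L^2(Q)}\le M$; so the claim that the cutoff errors contribute at most $CM^2e^{2s\varphi(t_*)}$ is unjustified in the stated conditional-stability class. The paper sidesteps this by keeping $\chi(t)$ outside the Volterra integral and exploiting $\partial_t\chi\le 0$ in the integration-by-parts argument of Lemma 2.1 (the $\chi\,\partial_t\chi$ terms have a favorable sign and are simply discarded, and the boundary term at $t=T$ vanishes since $\chi(T)=0$), so that the only genuine cutoff error is $(\partial_t\chi)w$, which is controlled by $M$. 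Your final optimization in $s$ and the resulting exponent $\kappa$ are in the same spirit as the paper's, but they rest on an intermediate inequality that your argument does not actually establish.
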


\begin{rmk}
We may not derive the uniqueness result (Theorem \ref{thm:uni}) directly by the stability estimate (Theorem \ref{thm:stab}). 
In the proof of the stability estimate, however, we may obtain the uniqueness in our inverse source problem. 
Hence we prove Theorem \ref{thm:uni} and Theorem \ref{thm:stab} in the next section  simultaneously. 
\end{rmk}

\section{Proofs of Theorems}

Setting $u=Rv$ in $\overline{Q}$, 
by \eqref{eq:ibvp}, we obtain
%
%
\begin{align}
\nonumber
&\pp_t v- L^\prime v 
-2\sum_{i,j=1}^n \frac1{R} a^\prime_{ij}(\pp_i R) \pp_j v  \\
&
+\left( \frac{\pp_t R}{R} 
- \sum_{i,j=1}^n \frac1{R} \pp_i (a_{ij}^\prime \pp_j R ) 
- \sum_{j=1}^n \frac1{R} b_j \pp_j R \right) v \nonumber \\
&
-\int_0^t \frac{R(\tau, x)}{R(t,x)} K(t,\tau) v (\tau, x)\,d\tau \nonumber \\
&
-\int_0^t \frac{1}{R(t,x)} 
\Biggl [
	2\sum_{i,j=1}^n \wa_{ij}(t,\tau, x) \pp_i R(\tau, x)\pp_j v (\tau, x)\nonumber \\
	&
	\qquad 
	+\sum_{i,j=1}^n  \pp_i (\wa_{ij}(t,\tau, x)\pp_j R (\tau, x)) v(\tau, x)
	+\sum_{j=1}^n \wb_j (t,\tau, x) \pp_j R(\tau,x) v(\tau, x)  
\Biggr]\,d\tau \nonumber \\
&=f(t,x^\prime) ,	\quad (t,x)\in Q,
\label{eq:ibvp_v} 
\end{align}
and $v(0,x)=0$,	 $x\in \Omega$,
$v(t,x)=\pp_1 v(t,x)=0$,	 $(t,x)\in \Sigma_0$. 

Differentiate the parabolic integro-differential equation of \eqref{eq:ibvp_v} with respect to $x_1$ and set $w=\pp_1 v$. 
Then we get 
%
%
\begin{align}
\nonumber 
&\pp_t w- L^\prime w 
-2\sum_{i,j=1}^n \frac1{R} a^\prime_{ij}(\pp_i R) \pp_j w  \\
&
+\left( \frac{\pp_t R}{R} 
- \sum_{i,j=1}^n \frac1{R} \pp_i (a_{ij}^\prime \pp_j R ) 
- \sum_{j=1}^n \frac1{R} b_j \pp_j R \right) w \nonumber \\
&
-\int_0^t \frac{R(\tau, x)}{R(t,x)} K(t,\tau)w (\tau, x)\,d\tau \nonumber \\
&
-\int_0^t \frac{1}{R(t,x)} 
\Biggl [
	2\sum_{i,j=1}^n \wa_{ij}(t,\tau, x) \pp_i R(\tau, x)\pp_j w (\tau, x)\nonumber \\
	&
	\qquad 
	+\sum_{i,j=1}^n  \pp_i (\wa_{ij}(t,\tau, x)\pp_j R (\tau, x)) w(\tau, x)
	+\sum_{j=1}^n \wb_j (t,\tau, x) \pp_j R(\tau,x) w(\tau, x)  
\Biggr]\,d\tau \nonumber \\
&
- 
\sum_{j=1}^n (\pp_1 b_j)\pp_j v 
+2\sum_{i,j=1}^n  \pp_1 \left( \frac1{R} a^\prime_{ij} \pp_i R \right) \pp_j v \nonumber \\
&
- \left[
 \pp_1 c 
-\pp_1 
	\left(
	\frac{\pp_t R}{R} 
	- \sum_{i,j=1}^n \frac1{R} \pp_i (a_{ij}^\prime \pp_j R ) 
	- \sum_{j=1}^n \frac1{R} b_j \pp_j R
	\right)
\right] v \nonumber \\
&
-\int_0^t \pp_1 \left(\frac{R(\tau, x)}{R(t,x)}\right)  K(t,\tau) v (\tau, x)\,d\tau \nonumber \\
&
-\int_0^t  \pp_1 \left(\frac{1}{R(t,x)}\right) 
\Biggl [
	2\sum_{i,j=1}^n \wa_{ij}(t,\tau, x) \pp_i R(\tau, x)\pp_j v (\tau, x)\nonumber \\
	&
	\qquad 
	+\sum_{i,j=1}^n  \pp_i (\wa_{ij}(t,\tau, x)\pp_j R (\tau, x)) v(\tau, x)
	+\sum_{j=1}^n \wb_j (t,\tau, x) \pp_j R(\tau,x) v(\tau, x)  
\Biggr]\,d\tau \nonumber \\
&
-\int_0^t \frac{R(\tau, x)}{R(t,x)} 
\Biggl [
	\sum_{i,j=1}^n \pp_1 \wa_{ij}(t,\tau, x) \pp_i \pp_j v (\tau, x)
	+ \sum_{i,j=1}^n \pp_i \pp_1 \wa_{ij}(t,\tau, x)  \pp_j v (\tau, x) \nonumber \\
	&
	\qquad
	+\sum_{j=1}^n \pp_1 \wb_j (t,\tau, x) \pp_j v(\tau, x)
	+ \pp_1 \wc (t,\tau, x) v(\tau, x)
\Biggr]\,d\tau 
\nonumber \\
&-\int_0^t \frac{1}{R(t,x)} 
\Biggl [
	2\sum_{i,j=1}^n \pp_1 (\wa_{ij}(t,\tau, x) \pp_i R(\tau, x)) \pp_j v (\tau, x)
	+\sum_{i,j=1}^n  \pp_1\pp_i (\wa_{ij}(t,\tau, x)\pp_j R (\tau, x)) v(\tau, x) \nonumber \\
	&\qquad
	+\sum_{j=1}^n \pp_1 (\wb_j (t,\tau, x) \pp_j R(\tau,x)) v(\tau, x)  
\Biggr]\,d\tau 
=0 ,	\quad (t,x)\in Q,			 \nonumber 
\end{align}
and 
$w(0,x)=0$, $x\in \Omega$, 
$w(t,x)=0$, $(t,x)\in \Sigma_0$. 

Here we remark the representation of $v$ by $w$. By $w(t,0,x^\prime)=0$, $(t,x^\prime)\in (0,T) \times D$ and 
\begin{equation}
\label{eq:rep_v1}
\pp_1 v(t,x)= w(t,x),\quad (t,x)\in Q, 
\end{equation}
we have
\begin{equation}
\label{eq:rep_v2}
v(t,x)=\int_0^{x_1} w (t,\zeta,x^\prime)\,d\zeta, \quad (t,x)\in Q, 
\end{equation}
and then 
\begin{equation}
\label{eq:rep_v3}
\left\{
\begin{aligned}
\pp_t v(t,x)&=\int_0^{x_1} \pp_t w (t,\zeta,x^\prime)\,d\zeta, & (t,x)\in Q, \\
\pp_j v(t,x)&=\int_0^{x_1} \pp_j w (t,\zeta,x^\prime)\,d\zeta, & (t,x)\in Q, \\
\pp_1 \pp_1 v(t,x)&=\pp_1 w (t,x),& (t,x)\in Q, \\
\pp_1 \pp_j v(t,x)&=\pp_j w (t,x),  & (t,x)\in Q, \\
\pp_i \pp_j v(t,x)&= \int_0^{x_1} \pp_i\pp_j w (t,\zeta,x^\prime)\,d\zeta, & (t,x)\in Q,
\end{aligned}
\right.
\end{equation}
for $i,j=2,\ldots, n$.

Choose $t_1, t_2 \in (0,T)$ such that $0<t_0<t_1<t_2<T$. 
Set $\delta_k =e^{-\lambda (t_k-T)}$ for $k=0,1$. 
Let $\chi \in C^\infty (\R)$ be a cut off function such that 
\begin{equation*}
0 \leq \chi (t) \leq 1, \ t \in \R, \quad
\pp_t X (t) \leq 0,\ t\in \R, \quad
\chi (t)=
\left\{
\begin{aligned}
1,\quad t<t_1,\\
0,\quad t\geq t_2. 
\end{aligned}
\right. 
\end{equation*}

Setting $y=\chi w$, we obtain
%
%
\begin{align}
\nonumber
&\pp_t y- L^\prime y 
-2\sum_{i,j=1}^n \frac1{R} a^\prime_{ij}(\pp_i R) \pp_j y  \\
&
+\left( \frac{\pp_t R}{R} 
- \sum_{i,j=1}^n \frac1{R} \pp_i (a_{ij}^\prime \pp_j R ) 
- \sum_{j=1}^n \frac1{R} b_j \pp_j R \right) y \nonumber \\
&
=
\chi \int_0^t \frac{R(\tau, x)}{R(t,x)} 
K(t,\tau) w (\tau, x)
\,d\tau \nonumber \\
&
+
\chi \int_0^t \frac{1}{R(t,x)} 
\Biggl [
	2\sum_{i,j=1}^n \wa_{ij}(t,\tau, x) \pp_i R(\tau, x)\pp_j w (\tau, x)\nonumber \\
	&
	\qquad 
	+\sum_{i,j=1}^n  \pp_i (\wa_{ij}(t,\tau, x)\pp_j R (\tau, x)) w(\tau, x)
	+\sum_{j=1}^n \wb_j (t,\tau, x) \pp_j R(\tau,x) w(\tau, x)  
\Biggr]\,d\tau \nonumber \\
&
+
\sum_{j=1}^n (\pp_1 b_j) \chi\pp_j v
-2\sum_{i,j=1}^n  \pp_1 \left( \frac1{R} a^\prime_{ij} \pp_i R \right) \chi \pp_j v \nonumber \\
&
+ \left[
 \pp_1 c 
-\pp_1 
	\left(
	\frac{\pp_t R}{R} 
	- \sum_{i,j=1}^n \frac1{R} \pp_i (a_{ij}^\prime \pp_j R ) 
	- \sum_{j=1}^n \frac1{R} b_j \pp_j R
	\right)
\right] \chi v \nonumber \\
&
+\chi \int_0^t  \pp_1 \left(\frac{R(\tau, x)}{R(t,x)}\right) 
K(t,\tau) v (\tau, x)
\,d\tau\nonumber 
\\
&+\chi \int_0^t  \pp_1 \left(\frac{1}{R(t,x)}\right) 
\Biggl [
	2\sum_{i,j=1}^n \wa_{ij}(t,\tau, x) \pp_i R(\tau, x)\pp_j v (\tau, x)\nonumber \\
	&
	\qquad 
	+\sum_{i,j=1}^n  \pp_i (\wa_{ij}(t,\tau, x)\pp_j R (\tau, x)) v(\tau, x)
	+\sum_{j=1}^n \wb_j (t,\tau, x) \pp_j R(\tau,x) v(\tau, x)  
\Biggr]\,d\tau \nonumber \\
&
+\chi \int_0^t \frac{R(\tau, x)}{R(t,x)} 
\Biggl [
	\sum_{i,j=1}^n \pp_1 \wa_{ij}(t,\tau, x) \pp_i \pp_j v (\tau, x)
	+ \sum_{i,j=1}^n \pp_i \pp_1 \wa_{ij}(t,\tau, x)  \pp_j v (\tau, x) \nonumber \\
	&
	\qquad
	+\sum_{j=1}^n \pp_1 \wb_j (t,\tau, x) \pp_j v(\tau, x)
	+ \pp_1 \wc (t,\tau, x) v(\tau, x)
\Biggr]\,d\tau \nonumber \\
&
+\chi \int_0^t \frac{1}{R(t,x)} 
\Biggl [
	2\sum_{i,j=1}^n \pp_1 (\wa_{ij}(t,\tau, x) \pp_i R(\tau, x)) \pp_j v (\tau, x)\nonumber \\
	&
	\qquad 
	+\sum_{i,j=1}^n  \pp_1\pp_i (\wa_{ij}(t,\tau, x)\pp_j R (\tau, x)) v(\tau, x) \nonumber \\
	&\qquad
	+\sum_{j=1}^n \pp_1 (\wb_j (t,\tau, x) \pp_j R(\tau,x)) v(\tau, x)  
\Biggr]\,d\tau 
=(\pp_t \chi)w,	\quad (t,x)\in Q,			 
\label{eq:ibvp_y} 
\end{align}
and $y(0,x)=y(T,x)=0$, $x\in \Omega$, $y(t,x)=0$, $(t,x)\in \Sigma_0$. 

Applying the Carleman estimate for the parabolic equations (Lemma \ref{lem:tce01}) to \eqref{eq:ibvp_y} and 
noting that the boundedness for coefficients $a^\prime_{ij}, \wa_{ij}, b_j, \wb_j, c ,\wc$ and $R$, 
we have
\begin{align}
\nonumber 
&\int_Q \left[ \frac{1}{s\va} \left( |\pp_t y|^2 + \sum_{i,j=1}^n |\pp_i\pp_j y|^2 \right) + \la |\nabla y|^2 + s\la^2\va |y|^2 \right] e^{2s\va}\,dtdx\\
&\leq C 
\int_Q |(\pp_t\chi)w|^2 e^{2s\va}\,dtdx 
+C\int_Q   \la^2 |y|^2  e^{2s\va}\,dtdx 
+C \sum_{k=1}^5 I_k +CB_1,  
\label{eq:thmce1}
\end{align}
for $\la > \la_0$ and $s>1$, 
where 
\begin{align*}
I_1&=\int_Q \left| \chi \int_0^t \sum_{i,j=1}^n |\pp_i \pp_j w (\tau, x)| \,d\tau \right|^2 e^{2s\va}\,dtdx, \\
I_2&=\int_Q \left| \chi \int_0^t \left(\sum_{j=1}^n |\pp_j w (\tau, x)| + |w(\tau, x)| \right) \,d\tau\right|^2 e^{2s\va}\,dtdx,\\
I_3&=\int_Q \left( \sum_{j=1}^n |\chi \pp_j v|^2 + |\chi v|^2 \right)e^{2s\va}\,dtdx, \\
I_4&=\int_Q \left| \chi \int_0^t \sum_{i,j=1}^n |\pp_i \pp_j v (\tau, x)| \,d\tau \right|^2 e^{2s\va}\,dtdx, \\
I_5&=\int_Q \left| \chi \int_0^t \left(\sum_{j=1}^n |\pp_j v (\tau, x)| + |v(\tau, x)| \right) \,d\tau\right|^2 e^{2s\va}\,dtdx, 
\end{align*}
and 
\begin{equation}
\nonumber 
B_1=
\int_{ \Sigma_1} s\la\va \left(|\pp_t y| + |y| \right) \left|\frac{\pp y}{\pp \nu_{L^\prime}} \right|e^{2s\va}\,dtdS. 
\end{equation}
Henceforth we estimate from $I_1$ to $I_5$ on the right-hand side of \eqref{eq:thmce1}
by using the technique in the proof of Lemma \ref{lem:keyineq01}. 

\begin{align}
\nonumber
I_1&=
\int_Q \left| \chi \int_0^t \sum_{i,j=1}^n |\pp_i \pp_j w (\tau, x)| \,d\tau \right|^2 e^{2s\va}\,dtdx \\
&
\leq 
C \int_Q \chi^2 \left(  \int_0^t \sum_{i,j=1}^n |\pp_i \pp_j w (\tau, x)|^2  \,d\tau \right) e^{2s\va}\,dtdx \nonumber \\
&=-\frac{C}{2s\la} \int_Q \chi^2 \times \frac1{\va} \left(  \int_0^t \sum_{i,j=1}^n |\pp_i \pp_j w (\tau, x)|^2  \,d\tau \right) \pp_t( e^{2s\va})\,dtdx  \nonumber \\
&=
-\frac{C}{2s\la} \int_\Omega (\chi(T))^2 \times \frac1{\va(T)} \left(  \int_0^T \sum_{i,j=1}^n |\pp_i \pp_j w (\tau, x)|^2  \,d\tau \right)  e^{2s\va(T)}\,dx \nonumber \\
&\quad
+
\frac{C}{s\la} \int_Q \chi (\pp_t \chi) \times \frac1{\va} \left(  \int_0^t \sum_{i,j=1}^n |\pp_i \pp_j w (\tau, x)|^2  \,d\tau \right)  e^{2s\va}\,dtdx  \nonumber \\
&\quad
+\frac{C}{2s\la} \int_Q \chi^2  \pp_t \left(\frac1{\va}\right) \left(  \int_0^t \sum_{i,j=1}^n |\pp_i \pp_j w (\tau, x)|^2  \,d\tau \right) e^{2s\va}\,dtdx \nonumber  \\
&\quad
+\frac{C}{2s\la} \int_Q \chi^2 \times \frac1{\va} \sum_{i,j=1}^n |\pp_i \pp_j w (t, x)|^2   e^{2s\va}\,dtdx  \nonumber \\
&\leq 
C \int_Q  \frac1{s\la\va} \sum_{i,j=1}^n |\pp_i \pp_j y (t, x)|^2   e^{2s\va}\,dtdx  \nonumber \\
&\quad 
+C \int_Q \frac1{s} \chi^2   \left(  \int_0^t \sum_{i,j=1}^n |\pp_i \pp_j w (\tau, x)|^2  \,d\tau \right)  \va e^{2s\va}\,dtdx \nonumber \\
&=
C \int_Q  \frac1{s\la\va} \sum_{i,j=1}^n |\pp_i \pp_j y (t, x)|^2   e^{2s\va}\,dtdx  \nonumber \\
&\quad 
-C \int_Q \frac1{2s^2\la} \chi^2   \left(  \int_0^t \sum_{i,j=1}^n |\pp_i \pp_j w (\tau, x)|^2  \,d\tau \right)  \pp_t (e^{2s\va})\,dtdx \nonumber \\
&=
C \int_Q  \frac1{s\la\va} \sum_{i,j=1}^n |\pp_i \pp_j y (t, x)|^2   e^{2s\va}\,dtdx  \nonumber \\
&\quad 
-C \int_\Omega \frac1{2s^2\la} \chi(T)^2   \left(  \int_0^T \sum_{i,j=1}^n |\pp_i \pp_j w (\tau, x)|^2  \,d\tau \right)  e^{2s\va(T)}\,dx \nonumber  \\
&\quad
+C \int_Q \frac1{s^2\la} \chi (\pp_t \chi)   \left(  \int_0^t \sum_{i,j=1}^n |\pp_i \pp_j w (\tau, x)|^2  \,d\tau \right)  e^{2s\va}\,dtdx \nonumber \\
&\quad
+C \int_Q \frac1{2s^2\la} \chi^2  \sum_{i,j=1}^n |\pp_i \pp_j w (t, x)|^2  e^{2s\va}\,dtdx \nonumber \\
&\leq 
C \int_Q  \frac1{s\la\va} \sum_{i,j=1}^n |\pp_i \pp_j y |^2   e^{2s\va}\,dtdx  
+C\int_Q \frac1{s^2\la} \sum_{i,j=1}^n |\pp_i \pp_j y |^2  e^{2s\va}\,dtdx,  
\label{eq:T1}
\end{align}
for $\la>\la_0$ and $s>1$.  
Here we note that $\pp_t \chi \leq 0$ in $\R$. 
\begin{align}
\nonumber 
I_2&=
\int_Q \left| \chi \int_0^t \left(\sum_{j=1}^n |\pp_j w (\tau, x)| + |w(\tau, x)| \right) \,d\tau\right|^2 e^{2s\va}\,dtdx \\
&
\leq 
C \int_Q \chi^2 \left[  \int_0^t \left(\sum_{j=1}^n |\pp_j w (\tau, x)|^2 + |w(\tau, x)|^2 \right) \,d\tau \right] e^{2s\va}\,dtdx \nonumber \\
&\leq
C \int_Q \chi^2  \left[  \int_0^t \left(\sum_{j=1}^n |\pp_j w (\tau, x)|^2 + |w(\tau, x)|^2 \right) \,d\tau \right] \va e^{2s\va}\,dtdx \nonumber \\
&=
-\frac{C}{2s\la} \int_Q \chi^2 \left[  \int_0^t \left( \sum_{j=1}^n |\pp_j w (\tau, x)|^2 + |w(\tau, x)|^2 \right) \,d\tau \right] \pp_t ( e^{2s\va})\,dtdx \nonumber \\
&=
-\frac{C}{2s\la} \int_Q (\chi(T))^2 \left[  \int_0^T \left( \sum_{j=1}^n |\pp_j w (\tau, x)|^2 + |w(\tau, x)|^2 \right) \,d\tau \right]  e^{2s\va(T)}\,dtdx \nonumber \\
&\quad
+\frac{C}{s\la} \int_Q \chi (\pp_t \chi) \left[  \int_0^t \left( \sum_{j=1}^n |\pp_j w (\tau, x)|^2 + |w(\tau, x)|^2 \right) \,d\tau \right]  e^{2s\va}\,dtdx \nonumber \\
&\quad
+\frac{C}{2s\la} \int_Q \chi^2\left( \sum_{j=1}^n |\pp_j w (t, x)|^2 + |w(t, x)|^2 \right) e^{2s\va}\,dtdx \nonumber \\
&\leq 
C \int_Q \left( |\nabla y |^2 + |y|^2 \right) e^{2s\va}\,dtdx, 
\label{eq:T2}
\end{align}
for $\la>\la_0$ and $s>1$.  \\
Noting that \eqref{eq:rep_v1} -- \eqref{eq:rep_v3},  we get 
\begin{align}
\nonumber
I_3
&=\int_Q \left( \sum_{j=1}^n |\chi \pp_j v|^2 + |\chi v|^2 \right) e^{2s\va}\,dtdx \\
&
=
\int_Q  |\chi w|^2 e^{2s\va}\,dtdx  \nonumber \\
&\quad 
+ \int_Q \left(\sum_{j=2}^n \left|\chi \int_0^{x_1} \pp_j w(x_1,\zeta, x^\prime)\,d\zeta \right|^2 + \left|\chi \int_0^{x_1} w(x_1,\zeta, x^\prime)\,d\zeta\right|^2 \right) e^{2s\va}\,dtdx 
\nonumber \\
&
=
\int_Q  |y|^2 e^{2s\va}\,dtdx \nonumber \\
&\quad 
+ \int_Q \left( \sum_{j=2}^n \left| \int_0^{x_1} \pp_j y(x_1,\zeta, x^\prime)\,d\zeta \right|^2 + \left| \int_0^{x_1} y(x_1,\zeta, x^\prime)\,d\zeta\right|^2 \right) e^{2s\va}\,dtdx 
\nonumber \\
&\leq
\int_Q  |y|^2 e^{2s\va}\,dtdx
\nonumber \\
&\quad 
+ \ell \int_D \int_0^\ell \int_0^T \left( \sum_{j=2}^n \int_0^{\ell} |\pp_j y(x_1,\zeta, x^\prime)|^2 \,d\zeta + \int_0^{\ell} |y(x_1,\zeta, x^\prime)|^2 \,d\zeta \right) e^{2s\va}\,dtdx_1dx^\prime 
\nonumber \\
&=
\int_Q  |y|^2 e^{2s\va}\,dtdx
\nonumber \\
&\quad 
+ \ell^2 \int_D \int_0^\ell \int_0^T \left( \sum_{j=2}^n  |\pp_j y(x_1,\zeta, x^\prime)|^2 +  y(x_1,\zeta, x^\prime)|^2 \right) e^{2s\va}\,dtd\zeta dx^\prime 
\nonumber \\
&\leq 
C\int_Q \left(  |\nabla y|^2 +  |y|^2 \right) e^{2s\va}\,dtdx.  
\label{eq:T3}
\end{align}
By an argument similar to that used to derive \eqref{eq:T1},  
we have 
\begin{align}
\nonumber 
I_4&=\int_Q \left| \chi \int_0^t \sum_{i,j=1}^n |\pp_i \pp_j v (\tau, x)| \,d\tau \right|^2 e^{2s\va}\,dtdx
\\
&\leq 
C
\int_Q  \frac1{s\la \va} \sum_{i,j=1}^n |\chi \pp_i \pp_j v |^2 e^{2s\va}\,dtdx
+
C
\int_Q  \frac1{s^2 \la} \sum_{i,j=1}^n |\chi \pp_i \pp_j v |^2 e^{2s\va}\,dtdx, 
\nonumber 
\end{align}
for $\la>\la_0$ and $s>1$. 
Noting that \eqref{eq:rep_v1}--\eqref{eq:rep_v3}, 
we may estimate the right-hand side of the above inequality 
as similar as \eqref{eq:T3}, that is, 
for $\la>\la_0$ and $s>1$, 
we have
\begin{align*}
&\int_Q  \frac1{s\la \va} \sum_{i,j=1}^n |\chi \pp_i \pp_j v |^2 e^{2s\va}\,dtdx
+
\int_Q  \frac1{s^2 \la} \sum_{i,j=1}^n |\chi \pp_i \pp_j v |^2 e^{2s\va}\,dtdx. 
\\
&\leq 
C
\int_Q  \frac1{s\la \va} \sum_{i,j=1}^n |\pp_i \pp_j y |^2 e^{2s\va}\,dtdx
+
C
\int_Q  \frac1{s^2 \la} \sum_{i,j=1}^n |\pp_i \pp_j y |^2 e^{2s\va}\,dtdx\nonumber \\
&\quad
+
C
\int_Q   |\nabla y |^2 e^{2s\va}\,dtdx.  
\end{align*}
Summing up the above two inequalities, 
for $\la>\la_0$ and $s>1$, 
we obtain
\begin{align}
\nonumber 
I_4&\leq 
C
\int_Q  \frac1{s\la \va} \sum_{i,j=1}^n |\pp_i \pp_j y |^2 e^{2s\va}\,dtdx
+
C
\int_Q  \frac1{s^2 \la} \sum_{i,j=1}^n |\pp_i \pp_j y |^2 e^{2s\va}\,dtdx  \\
&\quad
+
C
\int_Q   |\nabla y |^2 e^{2s\va}\,dtdx. 
\label{eq:T4}
\end{align}
By the same argument as \eqref{eq:T2}, we have
\begin{align}
\nonumber 
I_5&=
\int_Q \left| \chi \int_0^t \left(\sum_{j=1}^n |\pp_j v (\tau, x)| + |v(\tau, x)| \right) \,d\tau\right|^2 e^{2s\va}\,dtdx \\
&\leq 
C\int_Q  \left(\sum_{j=1}^n |\chi \pp_j v|^2 + |\chi v|^2 \right)  e^{2s\va}\,dtdx.  \nonumber 
\end{align}
Combining this 
with \eqref{eq:T3}, we get
\begin{equation}
\label{eq:T5}
I_5\leq 
C\int_Q  \left( |\nabla y|^2 + |y|^2 \right)  e^{2s\va}\,dtdx.   
\end{equation}
Hence, by \eqref{eq:thmce1} and \eqref{eq:T1}--%
\eqref{eq:T5}, 
we have 
\begin{align}
\nonumber 
&\int_Q \left[ \frac{1}{s\va} \left( |\pp_t y|^2 + \sum_{i,j=1}^n |\pp_i\pp_j y|^2 \right) + \la |\nabla y|^2 + s\la^2\va |y|^2 \right] e^{2s\va}\,dtdx\\
&\leq C 
\int_Q |(\pp_t\chi)w|^2 e^{2s\va}\,dtdx 
+C\int_Q \left(  |\nabla y|^2 + \la^2 |y|^2 \right) e^{2s\va}\,dtdx \nonumber \\
&\quad 
+
C
\int_Q  \frac1{s\la \va} \sum_{i,j=1}^n |\pp_i \pp_j y |^2 e^{2s\va}\,dtdx
+
C
\int_Q  \frac1{s^2 \la} \sum_{i,j=1}^n |\pp_i \pp_j y |^2 e^{2s\va}\,dtdx \nonumber \\
&\quad
 +CB_1, \nonumber 
\end{align}
for $\la>\la_0$ and $s>1$. 

Taking sufficiently large $\la>\la_0$, 
and then, 
choosing sufficiently large $s>1$, 
we may absorb the second term to the forth term on the right-hand side of 
the above inequality 
into the left-hand side.  
Hence there exists $\la_1>\la_0$ such that for any $\la>\la_1$, we may take $s_0>1$ satisfying: 
there exists $C^\prime=C^\prime(s_0, \la_1)>$0 such that 
\begin{align}
\nonumber 
&\int_Q \left[ \frac{1}{s\va} \left( |\pp_t y|^2 + \sum_{i,j=1}^n |\pp_i\pp_j y|^2 \right) + \la |\nabla y|^2 + s\la^2\va |y|^2 \right] e^{2s\va}\,dtdx\\
&\leq C^\prime 
\int_Q |(\pp_t\chi)w|^2 e^{2s\va}\,dtdx 
 +C^\prime B_1, 
\label{eq:thmce3}
\end{align}
for all $s>s_0$. 
Let us estimate the first term on the right-hand side of \eqref{eq:thmce3}. 
\begin{align}
\nonumber
\int_Q |(\pp_t\chi)w|^2 e^{2s\va}\,dtdx 
&
\leq 
C^\prime e^{2s\delta_1} \int_\Omega\int_{t_1}^{t_2} |w|^2 \,dtdx   \\
&
\leq 
C^\prime e^{2s\delta_1} \int_\Omega\int_{t_1}^{t_2} \left( |\pp_1 u|^2 + |u|^2 \right) \,dtdx \nonumber \\
&
\leq 
C^\prime M^2 e^{2s\delta_1}. 
\label{eq:thmce3r1}
\end{align}
Next we estimate the boundary terms $B_1$. 
Note that $y=\chi w$, $w=\pp_1 v$, $v=u/R$. 
\begin{align}
 \nonumber 
B_1 
&=
\int_{ \Sigma_1} s\la\va \left(|\pp_t y| + |y| \right) \left|\frac{\pp y}{\pp \nu_{L^\prime}} \right| e^{2s\va}\,dtdS \\
&\leq 
C^\prime e^{C(\la)s} \int_{ \Sigma_1} \left(|\pp_t y| + |y| \right) \left|\frac{\pp y}{\pp \nu_{L^\prime}} \right|\,dtdS \nonumber \\
&\leq 
C^\prime e^{C(\la)s} \int_{ \Sigma_1} \left(|\pp_t w| + |w| \right) \left|\frac{\pp y}{\pp \nu_{L^\prime}} \right|\,dtdS \nonumber \\
&=
C^\prime e^{C(\la)s} \int_{ \Sigma_1} \left(|\pp_t \pp_1 v| + |\pp_1 v| \right) \left|\frac{\pp (\pp_1 v)}{\pp \nu_{L^\prime}}  \right|\,dtdS \nonumber \\
&\leq 
C^\prime e^{C(\la)s} \int_{ \Sigma_1} \left(|\pp_t \pp_1 u| + |\pp_t u| + |\pp_1 u| + |u| \right)  \nonumber \\
&\qquad\qquad\quad
						\times \left( \left|\frac{\pp (\pp_1 u)}{\pp \nu_{L^\prime}}  \right| +\left|\frac{\pp u}{\pp \nu_{L^\prime}} \right| +|\pp_1 u| +|u| \right)\,dtdS \nonumber \\
&=: C^\prime  e^{C(\la)s} B_2. 
\label{eq:thmce3r2}
\end{align}
By \eqref{eq:thmce3}--%
\eqref{eq:thmce3r2}, we obtain
\begin{align}
\nonumber 
&\int_Q \left[ \frac{1}{s\va} \left( |\pp_t y|^2 + \sum_{i,j=1}^n |\pp_i\pp_j y|^2 \right) + \la |\nabla y|^2 + s\la^2\va |y|^2 \right] e^{2s\va}\,dtdx\\
&\leq C^\prime M^2 e^{2s\delta_1}
 +C^\prime B_2 e^{C(\la)s} . 
\label{eq:thmce4}
\end{align}
Henceforth we fix the sufficiently large parameter $\la>\la_1$. 
By \eqref{eq:thmce4}, we see that there exist $s_1>s_0(\la)$ and $\widetilde{C}=\widetilde{C} (s_1, \la)>0$ such that
\begin{align}
\nonumber 
&\int_Q \left[ \frac{1}{s} \left( |\pp_t y|^2 + \sum_{i,j=1}^n |\pp_i\pp_j y|^2 \right) +  |\nabla y|^2 + s |y|^2 \right] e^{2s\va}\,dtdx\\
&\leq \widetilde{C} M^2 e^{2s\delta_1}
 +\widetilde{C}B_2 e^{\widetilde{C}s}, 
\label{eq:thmce5} 
\end{align} 
for all $s>s_1$. 

Taking $L^2((0,t_0)\times \Omega)$ norm to the equation for $v$ \eqref{eq:ibvp_v}, we have

\begin{align}
\nonumber 
&\frac1{s} \| f \|^2_{L^2((0,t_0)\times D)} e^{2s\delta_0}\\
&\leq
\frac{C}{s}
\int_\Omega \int_0^{t_0} 
\Biggl[ \pp_t v- L^\prime v 
-2\sum_{i,j=1}^n \frac1{R} a^\prime_{ij}(\pp_i R) \pp_j v  \nonumber  \\
&\qquad
+\left( \frac{\pp_t R}{R} 
- \sum_{i,j=1}^n \frac1{R} \pp_i (a_{ij}^\prime \pp_j R ) 
- \sum_{j=1}^n \frac1{R} b_j \pp_j R \right) v 
\Biggr]^2 e^{2s\delta_0}\,dtdx \nonumber \\
&\quad
+\frac{C}{s}
\int_\Omega \int_0^{t_0} 
\Biggl\{\int_0^t \frac{R(\tau, x)}{R(t,x)} 
K(t,\tau) v (\tau, x)
\,d\tau
+\int_0^t \frac{1}{R(t,x)} 
\Biggl [2\sum_{i,j=1}^n \wa_{ij}(t,\tau, x) \pp_i R(\tau, x)\pp_j v (\tau, x)\nonumber \\
	&
	\qquad\quad 
	+\sum_{i,j=1}^n  \pp_i (\wa_{ij}(t,\tau, x)\pp_j R (\tau, x)) v(\tau, x)
	+\sum_{j=1}^n \wb_j (t,\tau, x) \pp_j R(\tau,x) v(\tau, x)  
\Biggr]\,d\tau 
\Biggr\}^2 e^{2s\delta_0}\,dtdx\nonumber \\
&\leq
\frac{C}{s}
\int_\Omega \int_0^{t_0} 
\left( |\pp_t v|^2 +\sum_{i,j=1}^n |\pp_i\pp_j v|^2 + \sum_{j=1}^n |\pp_j v|^2 +|v|^2
\right) e^{2s\va}\,dtdx \nonumber \\
&\quad
+\frac{C}{s}
\int_\Omega \int_0^{t_0} 
\int_0^{t_0}  
\left( \sum_{i,j=1}^n |\pp_i\pp_j v(\tau, x)|^2 + \sum_{j=1}^n |\pp_j v(\tau, x)|^2 +|v(\tau, x)|^2 
\right) e^{2s\va(\tau)}
\,d\tau \,dtdx \nonumber \\
&\leq
C
\int_\Omega \int_0^{t_0} 
\left[ \frac1{s} \left(|\pp_t v|^2 +\sum_{i,j=1}^n |\pp_i\pp_j v|^2 \right)+ |\nabla v|^2 +|v|^2
\right] e^{2s\va}\,dtdx \nonumber \\
&\leq
C
\int_\Omega \int_0^{t_0} 
\left[ \frac1{s} \left(|\pp_t y|^2 +\sum_{i,j=1}^n |\pp_i\pp_j y|^2 \right)+  |\nabla y|^2 +|y|^2
\right] e^{2s\va}\,dtdx, \nonumber 
\end{align}
for all $s>s_1$. 
In the last inequality, we used \eqref{eq:rep_v1} -- \eqref{eq:rep_v3} and $w=y$ in $(0,t_0)\times \Omega$. 
Together this 
with \eqref{eq:thmce5}, we obtain 
\begin{equation}
\label{eq:f2}
\frac1{s} \| f \|^2_{L^2((0,t_0)\times D)} e^{2s\delta_0}
\leq \widetilde{C} M^2 e^{2s\delta_1}
 +\widetilde{C}B_2 e^{\widetilde{C}s},
\end{equation}
for all $s>s_1$. 
Dividing the both hand side of \eqref{eq:f2} by $e^{2s\delta_1}/s$, 
for all $s>s_1$, 
\begin{equation*}
 \| f \|^2_{L^2((0,t_0)\times D)}
\leq \widetilde{C}s M^2 e^{-2s(\delta_0-\delta_1)}
 +\widetilde{C}s B_2 e^{\widetilde{C}s}. 
\end{equation*}
Noting that $\delta_0-\delta_1>0$, 
positive constants $C_1, C_2, D_1, D_2>0$ exist such that 
\begin{equation*}
 \| f \|^2_{L^2((0,t_0)\times D)}
\leq s \left( C_1 M^2 e^{-D_1 s}+C_2 B_2 e^{D_2 s}  \right), 
\end{equation*}
for all $s>s_1$.  
Choosing $C_3>0$ such that $C_1 \leq C_3 e^{D_1s_1}$ and $C_2 \leq C_3 e^{-D_2s_1}$ 
and setting $\sigma=s-s_1$, for all $\sigma >0$, we have, 
\begin{equation}
\label{eq:fsigma}
 \| f \|^2_{L^2((0,t_0)\times D)}
\leq C_3(\sigma +s_1) \left( M^2 e^{-D_1 \sigma}+ B_2 e^{D_2 \sigma}  \right). 
\end{equation}

\begin{proof}[Proof of Theorem \ref{thm:uni}]
Since $u(t,x)=0$, $(t,x)\in \Sigma_1$, we have 
\begin{equation*}
\pp_t \pp_1 u(t,x) 
=\pp_t u(t,x)
=\pp_1 u(t,x)
=u(t,x)=0, \quad (t,x)\in \Sigma_1. 
\end{equation*}
Hence, the boundary term vanishes, that is, $B_2=0$. 
Thus we obtain the following estimate by \eqref{eq:fsigma}. 
\begin{equation}
\nonumber 
0\leq 
 \| f \|^2_{L^2((0,t_0)\times D)}
\leq C_3  (\sigma +s_1) M^2 e^{-D_1 \sigma}, 
\end{equation}
for all  $\sigma >0$. As $\sigma$ goes to $\infty$, 
the right-hand side of the above inequality 
tends to $0$. 
Therefore $ \| f \|_{L^2((0,t_0)\times D)}=0$, that is, $f(t,x^\prime)=0$, $(t,x^\prime)\in (0,t_0)\times D$. 
\end{proof}

\begin{proof}[Proof of Theorem \ref{thm:stab}]
Firstly we estimate $B_2$ by $B$. 
By the Cauchy Schwarz inequality, we have
\begin{align}
\nonumber 
B_2
&=
 \int_{ \Sigma_1} \left(|\pp_t \pp_1 u| + |\pp_t u| + |\pp_1 u| + |u| \right) 
 \left( \left|\frac{\pp (\pp_1 u)}{\pp \nu_{L^\prime}}  \right| +\left|\frac{\pp u}{\pp \nu_{L^\prime}} \right| +|\pp_1 u| +|u| \right)\,dtdS \\
&\leq 
C \int_{ \Sigma_1} \left(|\pp_t \pp_1 u|^2 + |\pp_t u|^2 + |\pp_1 u|^2 + |u|^2 + \left|\frac{\pp (\pp_1 u)}{\pp \nu_{L^\prime}}  \right|^2 + \left|\frac{\pp u}{\pp \nu_{L^\prime}} \right|^2\right) \,dtdS \nonumber \\
&\leq C B^2. \nonumber 
\end{align}
Together this with \eqref{eq:fsigma}, we obtain
\begin{equation}
\label{eq:fsigma2}
 \| f \|^2_{L^2((0,t_0)\times D)}
\leq C_3(\sigma +s_1) \left( M^2 e^{-D_1 \sigma}+ B^2 e^{D_2 \sigma}  \right). 
\end{equation}

In the case of $B \geq M$, we may obtain the conditional stability estimate 
by the a priori boundedness $\|f\|_{L^2((0,t_0)\times D)}\leq M$. 

If $B=0$,  by an argument similar to the the proof of the uniqueness (Theorem \ref{thm:uni}), 
we have $f(t,x^\prime)=0$, $(t,x^\prime)\in (0,T)\times D$. 

Thus it is sufficient to suppose that $0<B<M$. 
Choose 
\begin{equation*}
\sigma = -\frac{2 \log \frac{B}{M}}{D_1+D_2}>0.  
\end{equation*}
This $\sigma$ minimizes the right-hand side of \eqref{eq:fsigma2}. 
Setting $\kappa = \frac{D_1}{D_1+D_2}$, we have
\begin{equation*}
 M^2 e^{-D_1 \sigma}+ B^2 e^{D_2 \sigma} 
=2 M^{2(1-\kappa)}B^{2\kappa}. 
\end{equation*}
Combining this with \eqref{eq:fsigma2}, there exists $C_4>0$ such that 
\begin{equation*}
 \| f \|^2_{L^2((0,t_0)\times D)}
\leq C_4 (\sigma +s_1)M^{2(1-\kappa)}B^{2\kappa}. 
\end{equation*}
Thus we conclude \eqref{eq:stab}. 
\end{proof}

%
%
%

\end{document}